\newtheorem{theorem}{Theorem}
\theoremstyle{definition}
\theoremstyle{remark}
\newcommand{\g}{\geqslant}
\newcommand{\RR}{\mathbb{R}}
\newcommand{\ZZ}{\mathbb{Z}}
\newcommand{\CC}{\mathbb{C}}
\newcommand{\les}{\leqslant}
\newcommand{\lesa}{\lesssim}
\newcommand{\mc}[1]{\mathcal{#1}}
\newcommand{\mb}[1]{\mathbf{#1}}
\newcommand{\ind}{\mathbbold{1}}
\newcommand{\ma}{\measuredangle}
\DeclareSymbolFont{bbold}{U}{bbold}{m}{n}
\DeclareSymbolFontAlphabet{\mathbbold}{bbold}
\DeclareMathOperator*{\supp}{supp}
\begin{document}

\title{A note on bilinear wave-Schr\"odinger interactions}%
\author{Timothy Candy}%
\address[T.~Candy]{Department of Mathematics and Statistics, University of Otago, PO Box 56, Dunedin 9054, New Zealand}
\email{tcandy@maths.otago.ac.nz}

\thanks{Financial support by the Marsden Fund Council grant 19-UOO-142, and the German Research Foundation (DFG) through the CRC 1283 ``Taming uncertainty and profiting from
  randomness and low regularity in analysis, stochastics and their  applications'' is acknowledged.}

%\thanks{}%
%\subjclass{}%
%\keywords{}%

%\date{}%
%\dedicatory{}%
%\commby{}%
% ----------------------------------------------------------------
\begin{abstract}
We consider bilinear restriction estimates for wave-Schr\"odinger interactions and provided a sharp condition to ensure that the product belongs to $L^q_t L^r_x$ in the full bilinear range $\frac{2}{q} + \frac{d+1}{r} < d+1$, $1\les q, r \les 2$. Moreover, we give a counter-example which shows that the bilinear restriction estimate can fail, even in the transverse setting. This failure is closely related to the lack of curvature of the cone. Finally we mention extensions of these estimates to adapted function spaces. In particular we give a general transference type principle for $U^2$ type spaces that roughly implies that if an estimate holds for homogeneous solutions, then it also holds in $U^2$. This transference argument can be used to obtain bilinear and multilinear estimates in $U^2$ from the corresponding bounds for homogeneous solutions.
\end{abstract}
\maketitle
% ----------------------------------------------------------------

Let $u = e^{it|\nabla|} f$ be a free wave, and let $v = e^{it\Delta}g$  be a homogeneous solution to the Schr\"odinger equation. Our goal is to understand for which $1\les q, r \les \infty$ we have the bilinear estimate
        \begin{equation}\label{eqn:bilinear intro}
             \| u v\|_{L^q_t L^r_x(\RR^{1+d})} \lesa \| f \|_{L^2(\RR^d)} \| g \|_{L^2(\RR^d)}.
        \end{equation}
As a first step in this direction, assuming for instance that we have the support condition $\supp \widehat{f}, \supp \widehat{g} \subset \{ |\xi| \approx 1\}$, then for any
$\frac{2}{q_1} + \frac{d-1}{r_1} \les \frac{d-1}{2}$ with $(q_1, r_1, d ) \not = (2, \infty, 3)$, and any $\frac{2}{q_2} + \frac{d}{r_2} \les \frac{d}{2}$ with $(q_2, r_2, d)\not = (2, \infty, 2)$ we have the linear Strichartz estimates
   $$ \| u \|_{L^{q_1}_t L^{r_1}_x(\RR^{1+d})} \lesa \| f \|_{L^2(\RR^d)}, \qquad  \| v \|_{L^{q_2}_t L^{r_2}_x(\RR^{1+d})} \lesa \| g \|_{L^2(\RR^d)}.$$
Consequently an application of H\"older's inequality and a short computation shows that the bilinear estimate \eqref{eqn:bilinear intro} holds provided that
        \begin{equation}\label{eqn:bi range via stric}
          \frac{2}{q} + \frac{d}{r} \les d, \qquad \qquad  \frac{2}{q} + \frac{d-1}{r} \les d-1 + \frac{1}{d},\qquad \text{and} \qquad (q, d) \not = \Big(\frac{4}{3}, 2\Big), (1, 3).
        \end{equation}
The first condition in \eqref{eqn:bi range via stric} is stronger in the region $q\g 2$ and follows by simply placing $u\in L^\infty_t L^2_x$ and using the Strichartz estimate for $v$. Note that this explains the Schr\"odinger scaling of the first condition in \eqref{eqn:bi range via stric}. The second condition in \eqref{eqn:bi range via stric} dominates in the region $1\les q \les 2$, where we are forced to use the Strichartz estimates on both $u$ and $v$.

A natural question now arises, is it possible to improve on the conditions \eqref{eqn:bi range via stric}? This question is particularly relevant in applications to nonlinear PDE, where bilinear estimates such as \eqref{eqn:bi range via stric} with $q, r$ as small as possible, are extremely useful in controlling nonlinear interactions. Note that the wave-Schr\"odinger interactions occur naturally in important models, see for instance the Zakharov system \cite{zakharov_collapse_1972}. In the case of wave-wave interactions, it is possible to improve significantly on the range given by simply applying H\"older's inequality and the Strichartz estimate for the wave equation provided an additional transversality assumption is made.

\begin{theorem}[Bilinear restriction for wave {\cite{Wolff2001, Tataru2003}}] \label{thm:bilinear wave}
Let $d\g 2$ and $1\les q, r \les 2$ with $\frac{2}{q} + \frac{d+1}{r} < d+1$. If $f, g \in L^2(\RR^d)$ and $\omega, \omega' \in \mathbb{S}^{d-1}$ with\footnote{Here $\measuredangle(x,y) = ( 1 - \frac{x\cdot y}{|x| |y|})^\frac{1}{2}$ is the angle between $x, y\in \RR^d \setminus \{0\}$.} $\ma(\omega, \omega') \approx 1 $ and
        \begin{equation}\label{eqn:trans assump cone}
             \supp \widehat{f}  \subset \big\{ \xi \in \RR^d \,\,\big|\, |\xi| \approx 1, \ma(\xi, \omega) \ll 1 \big\}, \qquad \supp \widehat{g} \subset \big\{ \xi \in \RR^d \,\, \big|\, |\xi| \approx 1, \ma(\xi, \omega') \ll 1  \big\}
        \end{equation}
then
        $$ \| e^{it|\nabla|} f e^{it|\nabla|} g \|_{L^q_t L^r_x(\RR^{1+d}) } \lesa \| f \|_{L^2(\RR^d)} \| g \|_{L^2(\RR^d)}. $$
\end{theorem}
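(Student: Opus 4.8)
The plan is to reduce everything to the classical bilinear restriction theorem for the cone, due to Wolff and Tataru, which states that under the transversality assumption $\ma(\omega,\omega')\approx 1$ one has
$$
\| e^{it|\nabla|} f\, e^{it|\nabla|} g \|_{L^2_{t,x}(\RR^{1+d})} \lesa \|f\|_{L^2} \|g\|_{L^2}
$$
whenever the Fourier supports lie in unit-scale caps around $\omega$ and $\omega'$ (this is the endpoint $q=r=2$, which sits on the boundary $\frac{2}{q}+\frac{d+1}{r}=d+1$ but is known to hold for the cone with transversality). From this $L^2_{t,x}$ bound, the strategy is to interpolate against a crude estimate in order to cover the full open range $\frac{2}{q}+\frac{d+1}{r}<d+1$, $1\les q,r\les 2$.

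First I would recall that the product $e^{it|\nabla|}f\, e^{it|\nabla|}g$ has spacetime Fourier transform supported in a bounded region of $\RR^{1+d}$: indeed $(\tau,\xi)$ must satisfy $|\xi|\lesa 1$ and $|\tau|\lesa 1$, since both frequencies are $O(1)$ and on the cone. Hence by Bernstein's inequality the $L^q_tL^r_x$ norm of the product is controlled, for $q\g 2$ and $r\g 2$, by its $L^2_{t,x}$ norm (losing only an absolute constant because the output frequency support has bounded measure). Wait — but this only gives the corner point and things above it, whereas we want the whole triangle $\frac{2}{q}+\frac{d+1}{r}<d+1$ which extends to smaller $q,r$. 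So instead the relevant interpolation partner is an estimate valid for small $r$: the trivial bound
$$
\| e^{it|\nabla|}f\, e^{it|\nabla|}g\|_{L^\infty_t L^1_x} \les \|e^{it|\nabla|}f\|_{L^\infty_tL^2_x}\|e^{it|\nabla|}g\|_{L^\infty_tL^2_x} = \|f\|_{L^2}\|g\|_{L^2}
$$
by Cauchy--Schwarz and the fact that $e^{it|\nabla|}$ is unitary on $L^2$. Interpolating (bilinearly, via the standard $T$-fixed Riesz--Thorin argument, or just by Hölder combined with the two endpoint estimates applied to Littlewood--Paley-localized pieces) between this $(q,r)=(\infty,1)$ bound and the Wolff--Tataru $(q,r)=(2,2)$ bound yields the estimate on the open segment joining them, namely all $(1/q,1/r)$ with $1/q=\theta/2$, $1/r=\theta/2+(1-\theta)$, $\theta\in(0,1)$; and then one also interpolates with the $L^\infty_tL^2_x$-type bounds that give smaller $r$ down to $r$ near $1$ while keeping $q$ between these to sweep out the full triangle. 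Concretely, the three "vertex" estimates $(q,r)\in\{(\infty,1),(\infty,2),(2,2)\}$ together with bilinear interpolation cover the closed triangle with those corners, and the condition $\frac{2}{q}+\frac{d+1}{r}<d+1$ is exactly the open version after one also throws in Bernstein to push $r$ slightly above $2$ where needed — so some care is needed about which corner controls which sub-range, but each is elementary once the $L^2_{t,x}$ estimate is in hand.

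Since $q,r\les 2$ in the hypothesis, the only genuinely nontrivial input is the $(2,2)$ estimate; all the others follow from unitarity of $e^{it|\nabla|}$ on $L^2$, Hölder in space, and Bernstein using the compact spacetime-frequency support of the product. I would organize the proof as: (1) observe the output has spacetime frequency support in a ball of radius $O(1)$; (2) quote Wolff--Tataru for the $L^2_{t,x}$ bound; (3) record the $L^\infty_tL^1_x$ and $L^\infty_tL^2_x$ bounds from Cauchy--Schwarz/Hölder and unitarity; (4) interpolate. The main obstacle — really the only one — is making sure the bilinear interpolation is legitimate and that the resulting admissible region is precisely $\{1\les q,r\les 2:\ \frac{2}{q}+\frac{d+1}{r}<d+1\}$; this is where one must be slightly careful, since the bilinear complex interpolation method applies cleanly between fixed pairs of estimates, and one wants to check that the union over all such interpolations (together with the trivial Bernstein improvement in $r$) covers the stated open range without leaving gaps near the edges $q=2$ or $r=2$. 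The strictness of the inequality is what buys the small room needed for Bernstein, so no endpoint issues arise.
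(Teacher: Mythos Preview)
First, a clarification about the paper: this theorem is \emph{not} proved in the paper. It is quoted as a known result of Wolff and Tataru (with the endpoint due to Tao), and the paper uses it only as background for the wave--Schr\"odinger discussion. So there is no ``paper's own proof'' to compare against.

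Your proposed argument, however, has a genuine gap that cannot be repaired. You want to obtain the range $1\les q,r\les 2$ with $\frac{2}{q}+\frac{d+1}{r}<d+1$ by interpolating between the bilinear $L^2_{t,x}$ estimate and the trivial $L^\infty_t L^1_x$ and $L^\infty_t L^2_x$ bounds. But in the $(1/q,1/r)$ plane your three vertices are $(\tfrac12,\tfrac12)$, $(0,1)$, $(0,\tfrac12)$, and their convex hull lies entirely in the half-plane $\{1/q\les \tfrac12\}$, i.e.\ $q\g 2$. The region claimed in the theorem is $\{1/q\g \tfrac12\}$, i.e.\ $q\les 2$; interpolation cannot take you there. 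Bernstein does not help either: the space--time Fourier support of the product being bounded lets you pass from $L^2$ to $L^q_t L^r_x$ only for $q,r\g 2$, again the wrong direction.

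There is also a factual slip: the point $(q,r)=(2,2)$ is not on the boundary line $\frac{2}{q}+\frac{d+1}{r}=d+1$ for $d\g 2$; plugging in gives $\frac{d+3}{2}$, which equals $d+1$ only when $d=1$. For $d\g 2$ the $L^2_{t,x}$ bilinear estimate is the \emph{easy} case (it follows from Plancherel and transversality, essentially the classical Klainerman--Machedon argument), and the entire content of the Wolff--Tataru theorem is precisely the extension to exponents \emph{below} $2$, down to the sharp line. In other words, the result you are quoting as input \emph{is} the theorem you are trying to prove, not a corollary of its $(2,2)$ case. The proof requires the full induction-on-scales / wave-packet machinery of Wolff and Tao (or Tataru's approach), and cannot be reduced to interpolation from elementary endpoints.
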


The first result beyond the linear Strichartz theory was obtained in \cite{Bourgain1995}. The endpoint and extension to more general frequency interactions is also known \cite{Tao2001b, Tataru2003, Temur2013}. The range for $(q, r)$ is sharp, and was originally conjectured by Klainerman-Machedon. Theorem \ref{thm:bilinear wave} is closely related to the restriction conjecture for the cone, as the free wave $e^{it |\nabla|} f$ is essentially the extension operator for the cone. In particular, bilinear estimates of the form \eqref{thm:bilinear wave} were originally used to obtain restriction estimates for the cone, see for instance \cite{Tao2000a}.

Theorem \ref{thm:bilinear wave} is truly a bilinear estimate as it relies crucially on the support assumption \eqref{eqn:trans assump cone}. This assumption implies that the two subsets of the cone, $\supp \mc{F}[e^{it|\nabla|}f] \subset \RR^{1+d}$ and $\supp  \mc{F}[e^{it|\nabla|} g]\subset \RR^{1+d}$, are transverse, where $\mc{F}$ denotes the space-time Fourier transform. Since the waves $e^{it|\nabla|} f$ and $e^{it|\nabla|} g$ propagate in the normal directions to these surfaces, the two waves can only interact strongly for short times. Thus we should expect the product $e^{it|\nabla|}f e^{it|\nabla|} g$ to decay faster than say $(e^{it|\nabla|}f)^2$.

If we apply the above discussion to the bilinear estimate \eqref{eqn:bilinear intro}, since the normal direction to the cone is $(1, -\frac{\xi}{|\xi|})$, and the normal direction to the paraboloid is $(1, 2\xi)$, we should expect to improve on the range \eqref{eqn:bi range via stric} obtained via the linear Strichartz estimates, by imposing a transversality condition of the form
        \begin{equation}\label{eqn:wave-schro trans}
            \Big| \frac{\xi}{|\xi|} + 2 \eta \Big| \gtrsim 1
        \end{equation}
for all $\xi \in \supp \widehat{f}$ and $\eta \in \widehat{g}$ (here $\widehat{f}$ denotes the spatial Fourier transform). Unfortunately, the simple transversality condition \eqref{eqn:wave-schro trans} does not suffice due to the lack of curvature of the cone along the surface of intersection
$$ \Sigma_{wave}(a,z)= \big\{ (\tau, \xi) \in \supp \mc{F}[e^{it|\nabla|} f] \,\, \big| \,\, (a, z) - (\tau, \xi) \in  \supp \mc{F}[e^{it\Delta} g]\big\}, \qquad (a, z) \in \RR^{1+d}.$$
In fact it is well known that for certain surfaces, transversality alone is not sufficient to obtain the full bilinear range, see for instance \cite{Lee2006} for the example of the hyperbolic paraboloid, and the related discussion in \cite{Bejenaru2017b, Candy2019b}. However, imposing a stronger support condition gives the following.

\begin{theorem}[Wave-Schr\"odinger bilinear restriction \cite{Candy2019b}]\label{thm:wave-schrodinger bilinear free solns}
Let $d\g 2$, $1\les q, r \les 2$, and $\frac{2}{q} + \frac{d+1}{r} < d+1$. Let $\xi_0, \eta_0 \in \RR^d$ such that
    \begin{equation}\label{eq:wave-schro general trans} \Big| \Big( \frac{\xi_0}{|\xi_0|} + 2 \eta_0 \Big) \cdot \frac{\xi_0}{|\xi_0|}\Big| \gtrsim \Big| \frac{\xi_0}{|\xi_0|} + 2 \eta_0 \Big| \end{equation}
and define $\lambda =  |\eta_0|$, and $\alpha = | \frac{\xi_0}{|\xi_0|} + 2 \eta_0 |$. If
	$$\supp \widehat{f} \subset \big\{ |\xi| \approx \lambda, \ma(\xi, \xi_0) \ll  \min\{1, \alpha \} \big\}, \qquad \supp \widehat{g} \subset \{ |\xi - \eta_0| \ll \alpha\}$$
then we have
    $$ \big\| e^{ it \Delta} f e^{ it |\nabla|} g \big\|_{L^q_t L^r_x(\RR^{1+d})} \lesa ( \min\{\alpha, \lambda, \alpha \lambda\})^{d+1 - \frac{d+1}{r} - \frac{2}{q}} \alpha^{\frac{1}{r}-1} \lambda^{\frac{1}{q} - \frac{1}{2}} \|f\|_{L^2_x} \| g \|_{L^2_x}. $$
\end{theorem}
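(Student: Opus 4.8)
The plan is to deduce the estimate from the bilinear restriction estimate for the cone, Theorem~\ref{thm:bilinear wave}, by replacing the relevant piece of the Schr\"odinger paraboloid with a piece of a light cone.

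\emph{Normalisation.} The Schr\"odinger flow commutes with parabolic rescalings and with Galilean boosts, a boost being a frequency modulation composed with a space--time shear $x\mapsto x-2vt$, under which $L^q_tL^r_x$ is invariant; such a shear maps the light cone $\{\tau=|\xi|\}$ to a tilted cone for which Theorem~\ref{thm:bilinear wave} still holds (being the image of the cone under a norm-preserving shear), and the bilinear cone estimate is covariant under the natural anisotropic rescalings. Combining these with a rotation, one reduces to a normalised frequency configuration; tracking how the $L^2_x\times L^2_x\to L^q_tL^r_x$ bound transforms under the rescalings is exactly what produces the prefactor $\alpha^{\frac1r-1}\lambda^{\frac1q-\frac12}$, while the positive scaling deficit $d+1-\frac{d+1}{r}-\frac2q$ of the cone estimate will ultimately produce the power of $\beta:=\min\{\alpha,\lambda,\alpha\lambda\}$.

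\emph{Geometric reduction.} Decompose $\widehat f$ into boundedly overlapping caps of diameter $\approx\beta$. On each such cap the paraboloid $\{(|\xi|^2,\xi)\}$ agrees with a suitably chosen light cone up to an $O(\beta^2)$ error in the variable dual to time --- i.e.\ up to a thickening at the curvature scale to which the bilinear restriction estimate is insensitive, the unavoidable rank mismatch between the Hessian of the paraboloid and that of a cone being harmless at this scale --- while $\{|\xi-\eta_0|\ll\alpha\}$ already carries a genuine cap of a light cone. The point is that transversality of the paraboloid and the cone alone does \emph{not} suffice here, as the lack of curvature along $\Sigma_{wave}$ and the hyperbolic-paraboloid-type counterexamples show; it is precisely the strengthened support hypotheses $\ma(\xi,\xi_0)\ll\min\{1,\alpha\}$ and $|\xi-\eta_0|\ll\alpha$ together with the one-sided condition \eqref{eq:wave-schro general trans} --- which forces the null direction of the Schr\"odinger cap and the propagation direction of the wave to be genuinely transverse --- that place the two caps in the configuration covered by a rescaled Theorem~\ref{thm:bilinear wave}. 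This yields, for each cap $\theta$ and the Fourier localisation $f_\theta$ of $f$ to $\theta$,
\[
 \| e^{it\Delta}f_\theta\, e^{it|\nabla|}g\|_{L^q_tL^r_x}\lesa \beta^{\,d+1-\frac{d+1}{r}-\frac2q}\|f_\theta\|_{L^2}\|g\|_{L^2}.
\]
An efficient way to package this reduction is to realise $e^{it\Delta}f$ and $e^{it|\nabla|}g$ as traces on a space--time hyperplane of free waves on $\RR^{1+(d+1)}$, with the extra frequency variable frozen at a large value for the first factor and near zero for the second, and then apply Theorem~\ref{thm:bilinear wave} in dimension $d+1$; there $\beta$ is the common cap size on the $(d+1)$-dimensional cone and the trace step supplies the $\alpha$- and $\lambda$-powers.

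\emph{Reassembly.} The main obstacle is summing the caps. Since $q,r\le2$ one cannot use orthogonality of the space--time Fourier supports directly, so instead I would exploit that for fixed $g$ the products $e^{it\Delta}f_\theta\, e^{it|\nabla|}g$ are boundedly overlapping rigid translates of one another, run a C\'ordoba-type square-function argument in an $L^2$-based intermediate norm, and interpolate against the per-cap bounds above, arranging that the $\lesa(\alpha/\beta)^{O(1)}$-many caps do not overcome the gain $\beta^{\,d+1-\frac{d+1}{r}-\frac2q}$; it is in this bookkeeping, balancing the number of caps against the strictly positive gain, that the hypothesis $\frac2q+\frac{d+1}{r}<d+1$ is consumed.
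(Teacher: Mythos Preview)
The paper does not reduce to Theorem~\ref{thm:bilinear wave} at all; it invokes the general bilinear restriction theorem for arbitrary phases in \cite{Candy2019b} and verifies its five hypotheses directly for $\Phi_1=|\xi|$, $\Phi_2=-|\xi|^2$. The only nontrivial check is condition~(i), which asks that the Hessian of each phase be nondegenerate along tangent directions to the intersection surface, and this is precisely where \eqref{eq:wave-schro general trans} is consumed: it forces the radial null direction of the cone's Hessian to be uniformly transverse to that tangent space.

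Your route has a genuine gap at the geometric reduction. The claim that on a $\beta$-cap the paraboloid ``agrees with a suitably chosen light cone up to an $O(\beta^2)$ error \dots\ to which the bilinear restriction estimate is insensitive'' is not justified: the Wolff--Tao argument is an induction on scales that exploits curvature at \emph{every} scale, and the paraboloid and any approximating cone differ at order $\beta^2$ exactly in the cone's radial direction, where the cone has zero curvature but the paraboloid does not. A thickening large enough to absorb this discrepancy also erases the curvature the estimate relies on, so you cannot simply invoke Theorem~\ref{thm:bilinear wave} cap-wise. Your alternative $(d{+}1)$-embedding is a known device, but as you describe it the wave factor lands on the measure-zero slice $\xi_{d+1}=0$ of the higher cone, so there is no honest $L^2(\RR^{d+1})$ input for Theorem~\ref{thm:bilinear wave}; moreover in that picture the two caps are automatically transverse once the auxiliary frequency is large, leaving no role for \eqref{eq:wave-schro general trans}---yet Theorem~\ref{thm:transverse counter} shows it cannot be dropped. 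Finally, the reassembly is not a proof: the products $e^{it\Delta}f_\theta\,e^{it|\nabla|}g$ are not ``rigid translates'' of one another (the paraboloid caps sit at different heights and carry different velocities), and for $q,r<2$ there is no off-the-shelf square-function/interpolation mechanism that sums the $\approx(\alpha/\beta)^{O(1)}$ caps without loss.
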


Theorem \ref{thm:wave-schrodinger bilinear free solns} is a consequence of a bilinear restriction estimate for general phases obtained in \cite{Candy2019b}. The special case $q=r$ and $\alpha = \lambda = 1$ could also be deduced from \cite{Bejenaru2017b}. As the precise conditions in \cite{Candy2019b} are complicated, the derivation is slightly nontrival and we give the details below in Section \ref{sec:proof of bilinear}. The dependence on the parameters $\alpha$ and $\lambda$ is sharp, and this is particularly useful in applications to nonlinear PDE where $\alpha$ and $\lambda$ roughly correspond to a derivative loss/gain. Clearly, applying Sobolev embedding and interpolating with the trivial case $q=\infty$, $r=1$ can extend the range to $q, r \g 2$ and $\frac{2}{q} + \frac{d+1}{r}<d+1$. However the dependence on $\alpha$ and $\lambda$ would no longer be sharp (i.e. losses may occur). \\

The condition \eqref{eq:wave-schro general trans} is necessary to obtain the full bilinear range $\frac{2}{q} + \frac{d+1}{r} \les  d+1$.

\begin{theorem}[Transverse counter example]\label{thm:transverse counter}
Suppose that the estimate \eqref{eqn:bilinear intro} holds for all $f, g \in L^2(\RR^d)$ with\footnote{Here $e_j \in \RR^d$, $j=1, \dots, n$ denote the standard basis vectors.}
        $$ \supp \widehat{f} \subset \{ |\xi-e_1| \ll 1 \}, \qquad \supp \widehat{g} \subset \{ | 2 \xi + e_1 + e_2 | \ll 1 \}. $$
Then
    \begin{equation}\label{eqn:counter example range}
            \frac{2}{q} + \frac{d-1}{r} + \frac{1}{2r} \les d.
    \end{equation}
\end{theorem}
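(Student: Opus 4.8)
To produce a counterexample showing the estimate forces $\frac{2}{q}+\frac{d-1}{r}+\frac{1}{2r}\le d$, the plan is to exhibit a one-parameter family of data $f=f_N$, $g=g_N$ and track how each side of \eqref{eqn:bilinear intro} scales in $N\to\infty$.

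The starting point is the geometry of the surface of intersection. With $\widehat f$ supported near $\xi=e_1$ the wave piece $e^{it|\nabla|}f$ has space-time Fourier support near the point $(1,e_1)$ on the cone, where the cone is degenerate: it is flat in the $d-1$ directions tangent to $\mathbb S^{d-1}$ at $e_1$ but curved in the radial direction. With $\widehat g$ supported near $\eta=-\tfrac12(e_1+e_2)$ the Schrödinger piece $e^{it\Delta}g$ has space-time Fourier support near $(|\eta|^2,\eta)$ on the paraboloid, where the full $d$-dimensional curvature is present. The transversality condition $|\tfrac{\xi}{|\xi|}+2\eta|\approx|{-e_2}|\approx 1$ holds, so this is genuinely the transverse regime, and the obstruction must come from the flatness of the cone, not from lack of transversality.

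Concretely, I would take $\widehat f_N$ to be (an $L^2$-normalised bump over) a plate of dimensions $1\times N^{-1}\times\cdots\times N^{-1}$ in the $\xi_1\times\xi_2\times\cdots\times\xi_d$ directions — i.e.\ thin in exactly the $d-1$ flat tangential directions of the cone, full width $1$ in the radial direction — so that $e^{it|\nabla|}f_N$ is essentially constant on a space-time slab of dimensions (roughly) $1$ in $t$, $1$ in $x_1$, and $N$ in each of $x_2,\dots,x_d$; after normalising, $\|e^{it|\nabla|}f_N\|_{L^\infty}\approx N^{-(d-1)/2}\cdot N^{(d-1)/2}$ wait—more carefully, $\|f_N\|_{L^2}\approx 1$ forces $|\widehat f_N|\approx N^{(d-1)/2}$ on its support, so $\|e^{it|\nabla|}f_N\|_{L^\infty}\approx N^{-(d-1)/2}$ and it is of this size on a slab of volume $\approx N^{d-1}$. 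For $g$ I would take $\widehat g_N$ a bump over a genuine $d$-dimensional ball of radius $N^{-1/2}$ near $\eta_0=-\tfrac12(e_1+e_2)$ (matching the parabolic scaling), so that after normalisation $e^{it\Delta}g_N$ has size $\approx N^{-d/4}\cdot N^{d/2}=N^{d/4}$... again, $|\widehat g_N|\approx N^{d/4}$, so $\|e^{it\Delta}g_N\|_{L^\infty}\approx N^{-d/4}$ on a parabolic cap region of dimensions $\approx 1$ in the "good" directions and persisting for time $\approx 1$, on a set of volume $\approx 1$. The point of the mismatch in the scaling of the two pieces (anisotropic $N^{-1}$ plate for the wave, isotropic $N^{-1/2}$ ball for the Schrödinger) is that there is a common space-time region — of dimensions $\sim 1$ in $t$, $\sim 1$ in $x_1$, and $\sim N^{1/2}$ in $x_2,\dots,x_d$ — on which \emph{both} $e^{it|\nabla|}f_N$ and $e^{it\Delta}g_N$ are coherent and of their maximal size, because on a scale $N^{1/2}\le N$ the wave piece still looks constant while the Schrödinger piece is still coherent. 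Computing $\|e^{it|\nabla|}f_N\,e^{it\Delta}g_N\|_{L^q_tL^r_x}$ from below on this region and $\|f_N\|_{L^2}\|g_N\|_{L^2}\approx 1$ on the right, the inequality \eqref{eqn:bilinear intro} turns into an inequality between powers of $N$; letting $N\to\infty$ yields \eqref{eqn:counter example range}.

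The main obstacle — and the step requiring genuine care rather than routine bookkeeping — is identifying the correct size and shape of the common coherence region and, in particular, justifying that the product really does attain the claimed lower bound there, i.e.\ verifying the stationary/non-stationary phase heuristics that let one replace $e^{it|\nabla|}f_N$ by a constant on an $N^{1/2}$-slab and $e^{it\Delta}g_N$ by a constant on the parabolic cap. This is where the flatness of the cone enters decisively: it is precisely because the cone is flat in the $d-1$ tangential directions that the wave piece remains coherent out to the larger scale $N^{1/2}$ (for a genuinely curved surface one would only get coherence on the natural $N^{-1/2}$-dual scale, the exponent would improve, and one would recover the full bilinear range). Once the lower bound on the product is pinned down, matching the powers of $N$ — keeping track of the $L^q_t$ versus $L^r_x$ weights, which is the source of the asymmetric $\frac{1}{2r}$ term — is a short computation. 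I would also remark that choosing the Schrödinger cap at the borderline parabolic scale $N^{-1/2}$ rather than something smaller is what makes the resulting condition sharp, consistent with the exponents appearing in Theorem~\ref{thm:wave-schrodinger bilinear free solns} when $\alpha\approx 1$.
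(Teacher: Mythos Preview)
Your single-bump approach does not yield the claimed condition; there is a genuine missing idea. If you carry out the computation you sketch --- $\widehat f_N$ on a $1\times N^{-1}\times\cdots\times N^{-1}$ plate (so $\|f_N\|_{L^2}\approx 1$, $|e^{it|\nabla|}f_N|\approx N^{-(d-1)/2}$ on the slab $|t|\lesa N^2$, $|x_1+t|\lesa 1$, $|x'|\lesa N$) and $\widehat g_N$ on a ball of radius $N^{-1/2}$ (so $\|g_N\|_{L^2}\approx 1$, $|e^{it\Delta}g_N|\approx N^{-d/4}$ on the tube $|t|\lesa N$, $|x_1+t|,|x_2+t|,|x''|\lesa N^{1/2}$) --- then the overlap region has time extent $N$ (not $1$ as you wrote) and spatial cross-section $1\times (N^{1/2})^{d-1}$. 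The resulting lower bound is
\[
\|uv\|_{L^q_tL^r_x}\gtrsim N^{-\frac{d-1}{2}-\frac{d}{4}}\,N^{\frac{1}{q}}\,N^{\frac{d-1}{2r}},
\]
which against $\|f_N\|_{L^2}\|g_N\|_{L^2}\approx 1$ forces only $\tfrac{2}{q}+\tfrac{d-1}{r}\le \tfrac{3d-2}{2}$. This is strictly weaker than \eqref{eqn:counter example range} for every $d\ge 2$ and $r\ge 1$, so the theorem does not follow.

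What the paper does instead is first pass, via Khintchine randomisation, from the scalar estimate \eqref{eqn:bilinear intro} to its \emph{vector-valued} form
\[
\Big\|\Big(\sum_j |e^{it|\nabla|}f_j|^2\Big)^{1/2}\Big(\sum_k |e^{it\Delta}g_k|^2\Big)^{1/2}\Big\|_{L^q_tL^r_x}\lesa \Big(\sum_j\|f_j\|_{L^2}^2\Big)^{1/2}\Big(\sum_k\|g_k\|_{L^2}^2\Big)^{1/2},
\]
and then tests this against a \emph{family} of translates: $\sim N^{1/2}$ spatial $e_1$-translates of $u$ tile the thickened slab $|x_1+t|\lesa N^{1/2}$, while $\sim N$ temporal translates together with $\sim N^{(d-1)/2}$ spatial $e_2,\dots,e_d$-translates of $v$ tile the full region $\Omega=\{|t|\lesa N^2,\ |x_1+t|\lesa N^{1/2},\ |x'|\lesa N\}$. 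The square-function lower bounds $U,V\gtrsim 1$ on $\Omega$ then give the sharper inequality. The tiling by translates --- in particular the temporal translates of the Schr\"odinger wave, which extend the effective time scale from $N$ to $N^2$ --- is exactly what your argument lacks, and it cannot be recovered by a single pair $(f_N,g_N)$ because the required superpositions are not themselves free solutions.
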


Note that if we let $\xi_0 = e_1$ and $\eta_0 =  - \frac{1}{2} e_1  - \frac{1}{2} e_2$, then $|\frac{\xi_0}{|\xi_0|} + 2 \eta_0| = 1$ but $(\frac{\xi_0}{|\xi_0|}+2\eta_0) \cdot \frac{\xi_0}{|\xi_0|} = 0$. In other words the transversality condition \eqref{eqn:wave-schro trans} holds, but the stronger condition \eqref{eq:wave-schro general trans} fails. The range \eqref{eqn:counter example range} is stronger than the bilinear range in Theorem \ref{thm:wave-schrodinger bilinear free solns} when $q$ is close to $1$ and $d\les 5$, see figure \ref{fig:range of q r}. If we drop the transversality condition completely, then a similar counter example can be used to prove the following.

\begin{figure}
\begin{tikzpicture}
    \draw[thick, <->] (0, 4.5) -- (0,0) -- (4.5, 0) ;
    \draw (0, -0.1) -- (0, 0.1) ;  %horizontal check marks
    \draw (4, -0.1) -- (4, 0.1);
    \draw (2, -0.1) -- (2, 0.1);
    \draw (-0.1, 0 ) -- ( 0.1, 0) ; %verticle check marks
    \draw (-0.1, 4) -- (0.1, 4);
    \draw (-0.1, 3) -- (0.1, 3);
    \node[align=left, below] at (0,-0.1) {$\frac{1}{2}$}; %horizontal labels
    \node[align=left, below] at (4,-0.1) {$1$};
    \node[align=left, below] at (4.6,0) {$\frac{1}{r}$};
    \node[align=left, below] at (2, -0.1) {$\frac{3}{4}$};
    \node[align=left, left] at (-0.1, 0) {$\frac{1}{2}$}; %verticle labels
    \node[align=left, left] at (-0.1, 4) {$1$};
    \node[align=left, left] at (-0.1, 4.6) {$\frac{1}{q}$};
    \node[align=left, left] at (-0.1, 3) {$\frac{7}{8}$};
    \draw (0,4) -- (2,0) ; %sharp line
    \draw[dashed] (0, 3) -- (1.33, 1.33) ; % counter example
    %\draw (0,3) -- (1.33, 1.33) -- (2,0) ; %line
    %\draw[dashed] (0,4) -- (1.33, 1.33);    %dashed line
    \draw[fill] (1.33,1.33) circle [radius=0.025]; %dot
    \node[right] at (1.33, 1.33) {$(\frac{2}{3}, \frac{2}{3})$} ;
\end{tikzpicture}
\caption{The range of $1 \les q, r\les 2$ in $d=3$. The line corresponds to the sharp bilinear line $\frac{2}{q} + \frac{d+1}{r} = d+1$ given by Theorem \ref{thm:wave-schrodinger bilinear free solns}. If \eqref{eqn:wave-schro trans} holds but \eqref{eq:wave-schro general trans} fails, then Theorem \ref{thm:transverse counter} states that the bilinear estimate \eqref{eqn:bilinear intro} can only hold to the left of the dotted line. }
\label{fig:range of q r}
\end{figure}
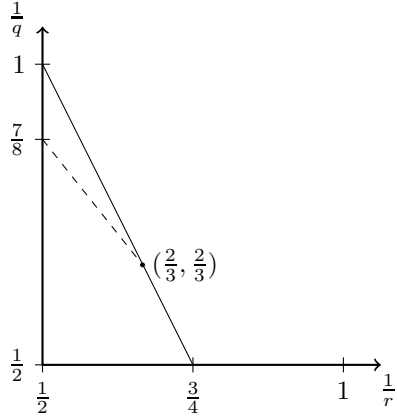

\begin{theorem}[Non-transverse counter example]\label{thm:non-transverse counter}
Suppose that the estimate \eqref{eqn:bilinear intro} holds  for all $f, g \in L^2(\RR^d)$ with $\supp \widehat{f}, \supp \widehat{g} \subset \{ |\xi| \approx 1\}$. Then
        \begin{equation}\label{eqn:counter example range nontransverse}
            \frac{2}{q} + \frac{d-1}{r} \les d - \frac{1}{2}, \qquad \frac{1}{q} \les \frac{d+1}{4}.
        \end{equation}
\end{theorem}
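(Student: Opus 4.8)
The plan is to prove both inequalities in \eqref{eqn:counter example range nontransverse} by exhibiting explicit extremisers: I would take $\widehat f,\widehat g$ to be smooth bumps adapted to suitable frequency caps inside $\{|\xi|\approx 1\}$, bound $\|uv\|_{L^q_tL^r_x}$ from below on an explicit spacetime region (with $u=e^{it|\nabla|}f$, $v=e^{it\Delta}g$ as in \eqref{eqn:bilinear intro}), compare with $\|f\|_{L^2}\|g\|_{L^2}$, and send the aperture of the caps to zero. The configuration to use is the non-transverse analogue of the example behind Theorem \ref{thm:transverse counter}: one moves the Schr\"odinger frequency until the two group velocities coincide, which forces it to have modulus $\approx\tfrac12$ rather than $\approx 1$; this mismatch with the unit sphere, together with the cone's lack of radial curvature, is the mechanism responsible for the failure.

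Concretely, fix small $\theta>0$. Let $\widehat f$ be adapted to a Knapp cap of the light cone of angular aperture $\theta$ about $e_1$ and radial extent $\approx 1$, so $|\supp\widehat f|\approx\theta^{d-1}$; since the cone is flat in the radial direction, a stationary-phase-type bookkeeping gives $|u|\gtrsim\theta^{d-1}$ on the slab $T_u=\{\,|x_1+t|\lesssim 1,\ |x'|\lesssim\theta^{-1},\ |t|\lesssim\theta^{-2}\,\}$, a sheet of unit thickness in the $e_1$-direction and width $\approx\theta^{-1}$ transversally, translating at unit speed in the direction $-e_1$ and persisting for the long time $|t|\lesssim\theta^{-2}$. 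Let $\widehat g$ be adapted to the ball of radius $\approx\theta$ about $\eta_0=-\tfrac12 e_1$ (permissible, since the support hypothesis only constrains frequencies to a fixed annulus and $|\eta_0|=\tfrac12$); then $|v|\gtrsim\theta^d$ on the tube $T_v=\{\,|x_1+t|\lesssim\theta^{-1},\ |x'|\lesssim\theta^{-1},\ |t|\lesssim\theta^{-2}\,\}$ of radius $\approx\theta^{-1}$, which translates at group velocity $2\eta_0=-e_1$ and so tracks $T_u$ exactly over the whole interval $|t|\lesssim\theta^{-2}$.

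For the first inequality: on $T_u\cap T_v$, whose section at each time $|t|\lesssim\theta^{-2}$ has spatial measure $\gtrsim\theta^{-(d-1)}$, one has $|uv|\gtrsim\theta^{2d-1}$, hence
\[
\|uv\|_{L^q_tL^r_x}\ \gtrsim\ \theta^{2d-1}\,\bigl(\theta^{-(d-1)}\bigr)^{1/r}\bigl(\theta^{-2}\bigr)^{1/q},\qquad \|f\|_{L^2}\|g\|_{L^2}\ \approx\ \theta^{(2d-1)/2}.
\]
Inserting these into \eqref{eqn:bilinear intro} and letting $\theta\to 0^+$ forces $2d-1-\tfrac{d-1}{r}-\tfrac2q\geqslant\tfrac{2d-1}{2}$, which rearranges to $\tfrac2q+\tfrac{d-1}{r}\leqslant d-\tfrac12$.

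For the second inequality: when $d\geqslant 3$ it holds automatically in the range $q\geqslant 1$ under consideration (then $\tfrac1q\leqslant 1\leqslant\tfrac{d+1}{4}$), and for finite $r$ it is in any case implied by the first. Its sharp form comes from taking $L^\infty_x$ in the computation above: since $\|uv(t)\|_{L^\infty_x}\gtrsim\theta^{2d-1}$ for $|t|\lesssim\theta^{-2}$, one gets $\|uv\|_{L^q_tL^\infty_x}\gtrsim\theta^{2d-1-2/q}$, and comparison with $\theta^{(2d-1)/2}$ yields $\tfrac1q\leqslant\tfrac{2d-1}{4}$, which at $d=2$ is precisely $\tfrac1q\leqslant\tfrac{d+1}{4}$. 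The only step I expect to require real care — and the main obstacle — is verifying that $T_u$ persists with $|u|$ still of size $\approx|\supp\widehat f|$ for the full interval $|t|\lesssim\theta^{-2}$: this is exactly where the absence of radial curvature of the cone is exploited, and it reduces to controlling the quadratic (angular) part of the phase $x\cdot\xi+t|\xi|$ against its linear part over a cap of radial extent $\approx 1$, together with checking that the choice $2\eta_0=-e_1$ keeps $T_v$ inside a fattening of $T_u$ throughout.
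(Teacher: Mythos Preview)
Your argument is correct: the Knapp cap for $u$ together with a Schr\"odinger bump at $\eta_0=-\tfrac12 e_1$ (so that the two group velocities coincide) is exactly the configuration the paper uses, and your single-scale computation recovers the condition $\tfrac{2}{q}+\tfrac{d-1}{r}\les d-\tfrac12$. Your treatment of the second inequality is also valid, since $\tfrac{1}{q}\les\tfrac{d+1}{4}$ is vacuous for $d\g 3$ when $q\g 1$, and for $d=2$ it is the $r=\infty$ endpoint of the first inequality.

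The paper proceeds slightly differently. It first passes, via Khintchine's inequality, from the scalar estimate \eqref{eqn:bilinear intro} to its vector-valued analogue, and then runs a \emph{two}-parameter family: the wave cap has aperture $N^{-1}$ while the Schr\"odinger bump has radius $M^{-1}$ with $1\les M\les N$. At $M=N$ the vector-valued sum collapses to a single term and the computation is literally yours. At $M=1$, however, the Schr\"odinger packet only persists for unit time, and the paper tiles the long time interval $|t|\lesa N^2$ with $\approx N^2$ time-translated copies of $v$; the square-function bound then yields $\tfrac{1}{q}\les\tfrac{d+1}{4}$ uniformly in $d$. So the paper obtains the second condition from a genuinely different limit of the same family, at the cost of the randomisation step, while you avoid that machinery by observing the condition is redundant once $d\g 3$. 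Your route is more elementary for this particular theorem; the paper's route is more uniform and reuses the vector-valued reduction already needed for Theorem~\ref{thm:transverse counter}.
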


We give the proof of Theorem \ref{thm:transverse counter} and Theorem \ref{thm:non-transverse counter} in Section \ref{sec:counter} below. In the positive direction, if \eqref{eqn:wave-schro trans} holds, but \eqref{eq:wave-schro general trans} fails, a naive adaption of the proof of Theorem \ref{thm:wave-schrodinger bilinear free solns} should give the region $\frac{2}{q} + \frac{d}{r} < d$. The loss of dimension corresponds to the lack of curvature in the radial direction (i.e. the cone only has $d-1$ directions of non-vanishing curvature). Note that there is a large gap between the potential  range  $\frac{2}{q} + \frac{d}{r} < d$ and the counter example given by Theorem \ref{thm:transverse counter}. Similarly, even in the ``linear'' case when the transversality condition \eqref{eqn:wave-schro trans} is dropped, there is a gap between the counter example in Theorem \ref{thm:non-transverse counter} and the linear range given via Strichartz estimates \eqref{eqn:bi range via stric}. It is an interesting open question to determine the precise range of $(q,r)$ once the general condition \eqref{eq:wave-schro general trans} is dropped. In particular, it is not clear to the author what the optimal range for $(q, r)$ should be. Presumably the counter examples used in the proof of Theorem \ref{thm:transverse counter} and Theorem \ref{thm:non-transverse counter} can be improved. \\

In applications to nonlinear PDE, typically the homogeneous estimate in Theorem \ref{thm:wave-schrodinger bilinear free solns} is not sufficient, and it is more useful to have a version in suitable function spaces. One option is to work with $X^{s,b}$ type spaces. However, recently bilinear restriction estimates in the $U^p$ type spaces have proven useful, see for instance \cite{Candy2018c} and the discussion within. In the following we wish to give a general argument which can allow multilinear estimates for homogeneous solutions, to be upgraded to estimates in the adapted function spaces $U^2$. The underlying idea is straight forward. The first step is use the classical theorem of Marcinkiewicz-Zygmund that given a bound for a linear operator, a standard randomisation argument via Khintchine's inequality implies that a vector valued operator bound also holds. The second step is use the observation that a vector valued estimate immediately implies a $U^2$ bound, see for instance \cite[Section 1.2]{Candy2019b} or \cite[Remark 5.2]{Candy2018a}. As an example, we extend Theorem \ref{thm:wave-schrodinger bilinear free solns}, and the multilinear restriction theorem \cite{Bennett2006} to $U^2$.

We start with the definition of $U^2$. A function $\phi \in L^\infty_t L^2_x$ is an $\emph{atom}$ if we can write $ \phi(t) = \sum_{I} \ind_I(t) g_I$, with the intervals $I \subset \RR$ forming a partition of $\RR$, and the $g_I:\RR^d \to \CC$ satisfying the bound
        $$\Big( \sum_I \| g_I \|_{L^2_x}^2\Big)^\frac{1}{2} \les 1.$$
The atomic space $U^2$ is then defined as
        $$U^2= \Big\{ \sum_j c_j \phi_j \mid \text{ $\phi_j$ an atom and } (c_j) \in \ell^1 \Big\} $$
with the induced norm
        $$ \| u \|_{U^2} = \inf_{ u = \sum_j c_j \phi_j} \sum_j |c_j|$$
where the inf is over all representations of $u$ in terms of atoms. These spaces were introduced in unpublished work of Tataru, and studied in detail in \cite{Koch2005, Hadac2009}. To obtain the adapted function spaces $U^2_{|\nabla|}$ and $U^2_{\Delta}$ adapted to the wave and Schr\"odinger flows respectively, we define
    $$  U^2_{|\nabla|} = \{ u: \RR^{1+d} \to \CC \mid e^{-it|\nabla|} u \in U^2 \}, \qquad U^2_{\Delta} = \{ v : \RR^{1+d} \to \CC \mid e^{-it\Delta} v \in U^2 \}. $$
Note that since $\ind_\RR(t) f \in U^2$, we clearly have $e^{it|\nabla|} f \in U^2_{|\nabla|}$ and $e^{it\Delta} f \in U^2_{\Delta}$. Thus the adapted function spaces contain all homogeneous solutions. Running the argument sketched above implies the following $U^2$ version of Theorem \ref{thm:wave-schrodinger bilinear free solns}.

\begin{theorem}[Wave-Schr\"odinger bilinear restriction in $U^2$]\label{thm:wave-schro bi U2}
Let $d\g 2$, $1\les q, r \les 2$, and $\frac{2}{q} + \frac{d+1}{r} < d+1$. Let $\xi_0, \eta_0 \in \RR^d$ such that
    \begin{equation}\label{eqn:wave-schro general trans U2} \Big| \Big( \frac{\xi_0}{|\xi_0|} + 2 \eta_0 \Big) \cdot \frac{\xi_0}{|\xi_0|}\Big| \gtrsim \Big| \frac{\xi_0}{|\xi_0|} + 2 \eta_0 \Big| \end{equation}
and define $\lambda =  |\eta_0|$, and $\alpha = | \frac{\xi_0}{|\xi_0|} + 2 \eta_0 |$. If
	$$\supp \widehat{u} \subset \big\{ |\xi| \approx \lambda, \ma(\xi, \xi_0) \ll  \min\{1, \alpha \} \big\}, \qquad \supp \widehat{v} \subset \{ |\xi - \eta_0| \ll \alpha\}$$
then we have
    $$\|  u v  \|_{L^q_t L^r_x(\RR^{1+d})} \lesa ( \min\{\alpha, \lambda, \alpha \lambda\})^{d+1 - \frac{d+1}{r} - \frac{2}{q}} \alpha^{\frac{1}{r}-1} \lambda^{\frac{1}{q} - \frac{1}{2}} \|u\|_{U^2_{|\nabla|}} \| v \|_{U^2_{\Delta}}. $$
\end{theorem}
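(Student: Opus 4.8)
The plan is to follow the two-step transference strategy sketched in the introduction, using Theorem \ref{thm:wave-schrodinger bilinear free solns} as the input. First I would upgrade the bilinear estimate for homogeneous solutions to a vector-valued estimate. Concretely, fix frequency-localised data $(f_k)_k, (g_l)_l \in \ell^2(L^2_x)$ supported in the relevant caps, and consider the bilinear operator $(f,g) \mapsto e^{it|\nabla|}f\, e^{it\Delta}g$. Applying the Marcinkiewicz--Zygmund theorem (equivalently, inserting Rademacher functions and using Khintchine's inequality in $L^q_tL^r_x$, which is legitimate since $1\le q,r\le 2<\infty$), the scalar bound of Theorem \ref{thm:wave-schrodinger bilinear free solns} promotes to
$$ \Big\| \Big( \sum_{k,l} \big| e^{it|\nabla|}g_k\, e^{it\Delta}f_l \big|^2 \Big)^{\frac12} \Big\|_{L^q_tL^r_x} \lesa C(\alpha,\lambda)\, \Big(\sum_k \|g_k\|_{L^2_x}^2\Big)^{\frac12} \Big(\sum_l \|f_l\|_{L^2_x}^2\Big)^{\frac12}, $$
with $C(\alpha,\lambda) = (\min\{\alpha,\lambda,\alpha\lambda\})^{d+1-\frac{d+1}{r}-\frac{2}{q}}\alpha^{\frac1r-1}\lambda^{\frac1q-\frac12}$; here one has to be mildly careful that Khintchine in the mixed norm $L^q_tL^r_x$ requires two successive applications (first in $x$, then in $t$, or vice versa), each with $q,r<\infty$.

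The second step is the observation, essentially from \cite[Section 1.2]{Candy2019b} (see also \cite[Remark 5.2]{Candy2018a}), that such a square-function (vector-valued) bound for homogeneous solutions self-improves to a $U^2$ bound. Given atoms $u = \sum_I \ind_I(t) g_I$ with $(\|g_I\|_{L^2_x})\in \ell^2$ and $v=\sum_J \ind_J(t) f_J$ with $(\|f_J\|_{L^2_x})\in \ell^2$, write on each time slab the product $uv$ as a sum of $e^{it|\nabla|}g_I\, e^{it\Delta}f_J$ restricted to $I\cap J$, and bound
$$ \|uv\|_{L^q_tL^r_x} \le \Big\| \Big(\sum_{I,J}\big|e^{it|\nabla|}g_I\, e^{it\Delta}f_J\big|^2\Big)^{\frac12}\Big\|_{L^q_tL^r_x}, $$
using that for each $(t,x)$ the sum over $I,J$ has at most one nonzero term (the slabs partition $\RR$), so the left side is pointwise dominated by the square function. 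The frequency supports of $e^{-it|\nabla|}u = e^{-it|\nabla|}\ind_I g_I$-pieces are the spatial Fourier supports of $u,v$, which lie in the required caps, so the vector-valued estimate from Step 1 applies and gives the atomic bound $\|uv\|_{L^q_tL^r_x}\lesa C(\alpha,\lambda)$. Finally, for general $u\in U^2_{|\nabla|}$, $v\in U^2_{\Delta}$ one uses the atomic decompositions $e^{-it|\nabla|}u = \sum_i c_i\phi_i$, $e^{-it\Delta}v=\sum_j d_j\psi_j$, expands $uv = \sum_{i,j} c_i d_j\, (e^{it|\nabla|}\phi_i)(e^{it\Delta}\psi_j)$, applies the bilinear atomic bound to each term, and sums via $\ell^1\times\ell^1 \hookrightarrow \ell^1$; taking infima over representations yields the claimed bound with $\|u\|_{U^2_{|\nabla|}}\|v\|_{U^2_{\Delta}}$ on the right.

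The step I expect to require the most care is the first one: making sure the Marcinkiewicz--Zygmund / Khintchine randomisation is applied in a way that respects both the bilinear structure (one needs independent Rademacher sequences for the two inputs, so that cross terms vanish in expectation) and the mixed space-time norm with $q\neq r$. A clean way to handle the bilinearity is to randomise only one input at a time: first fix $g$ and treat $f\mapsto e^{it|\nabla|}g\,e^{it\Delta}f$ as a linear operator into $L^q_tL^r_x$, obtaining an $\ell^2_l$-valued bound, then hold the $f_l$ fixed and randomise over $k$. Each application is a routine consequence of Khintchine in $L^q_tL^r_x$ provided one interchanges the $t$- and $x$-integrations with the Rademacher average (Fubini/Minkowski), which is where the restriction $q,r<\infty$ enters. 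Everything else — the pointwise domination by the square function on partitioned slabs, and the $\ell^1$ summation of atoms — is soft and should not present difficulties. The rest of the argument applies verbatim to any multilinear estimate for homogeneous solutions, which is why it also yields the stated $U^2$ extension of the multilinear restriction theorem of \cite{Bennett2006}.
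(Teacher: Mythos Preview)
Your proposal is correct and follows essentially the same approach as the paper: randomise one input at a time via Khintchine's inequality to pass from the scalar homogeneous estimate of Theorem~\ref{thm:wave-schrodinger bilinear free solns} to a square-function/vector-valued bound, then use the atomic structure (disjoint time slabs) to deduce the $U^2$ bound. The paper's proof merges your two steps into a single Khintchine argument applied directly to an atom (bounding $|u|\lesa \mb{E}[|\sum_I \epsilon_I e^{it|\nabla|} f_I|]$ pointwise, pulling the expectation outside $L^q_tL^r_x$ by Minkowski, and applying Theorem~\ref{thm:wave-schrodinger bilinear free solns} to the randomised data), but the content is identical to your two-step formulation, which is exactly the transference strategy outlined in the introduction.
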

\begin{proof}
Let $u = \sum_{I\in \mc{I}} e^{it|\nabla|} f_I$ be a $U^2_{|\nabla|}$ atom, and let $v_0 = e^{it\Delta} g$ be a homogeneous solution to the Schr\"odinger equation. Assume that the support conditions \eqref{eqn:wave-schro general trans U2} hold. Let $(\epsilon_I)_{I\in \mc{I}}$ be a family of independent, identically distributed random variables with $\epsilon_I = 1$ with probability $\frac{1}{2}$, and $\epsilon_I = -1$ with probability $\frac{1}{2}$. The since the intervals $I$ are disjoint, we have via Khintchine's inequality
        $$ |u| \les \Big( \sum_I | e^{it|\nabla|} f_I |^2 \Big)^\frac{1}{2} \approx \mb{E}\Big[ \Big|\sum_I \epsilon_I e^{it|\nabla|} f_I\Big| \Big]. $$
Therefore, since $q, r \g 1$, applying Theorem \ref{thm:wave-schrodinger bilinear free solns} gives
        \begin{align*}
             \| u v_0 \|_{L^q_t L^r_x} &\lesa \Big\| \mb{E}\Big[ \Big|\sum_I \epsilon_I e^{it|\nabla|} f_I\Big| \Big] v_0 \Big\|_{L^q_t L^r_x} \\
                                       &\lesa \mb{E}\Big[ \Big\|    e^{it|\nabla|} \Big( \sum_I \epsilon_I f_I\Big)  v_0 \Big\|_{L^q_t L^r_x} \Big]\\
                                       &\lesa  ( \min\{\alpha, \lambda, \alpha \lambda\})^{d+1 - \frac{d+1}{r} - \frac{2}{q}} \alpha^{\frac{1}{r}-1} \lambda^{\frac{1}{q} - \frac{1}{2}}
                                                \mb{E}\Big[ \Big\| \sum_I \epsilon_I f_I \Big\|_{L^2_x} \Big] \| g \|_{L^2_x}.
        \end{align*}
We now observe that H\"older's inequality, together with another application of Khintchine's inequality, implies that
$$  \mb{E}\Big[ \Big\| \sum_I \epsilon_I f_I \Big\|_{L^2_x} \Big] \les  \Big(\mb{E}\Big[ \Big\| \sum_I \epsilon_I f_I \Big\|_{L^2_x}^2 \Big]\Big)^\frac{1}{2} = \Big( \sum_I \|f_I\|_{L^2}^2 \Big)^\frac{1}{2}$$
and consequently, applying the definition of the $U^2_{|\nabla|}$ norm, we obtain
        \begin{equation}\label{eqn:thm wave-schro U2: one hom}
            \| u v_0 \|_{L^q_t L^r_x} \lesa ( \min\{\alpha, \lambda, \alpha \lambda\})^{d+1 - \frac{d+1}{r} - \frac{2}{q}} \alpha^{\frac{1}{r}-1} \lambda^{\frac{1}{q} - \frac{1}{2}}
                                                \| u \|_{U^2_{|\nabla|}} \| g \|_{L^2_x}.
        \end{equation}
To replace the homogeneous solution $v_0$ with a general $U^2_\Delta$ function follows by essentially repeating the above argument. In slightly more detail, suppose that $v = \sum_{J \in \mc{J}} e^{it\Delta} g_J$ is a $U^2_{\Delta}$ atom, and let $(\epsilon_J)_{J\in \mc{J}}$ be a family of i.i.d. random variables with $\epsilon_J = \pm 1$ with equal probability. Then as above, but replacing Theorem \ref{thm:wave-schrodinger bilinear free solns} with \eqref{eqn:thm wave-schro U2: one hom}, we see that
    \begin{align*}
      \| u v \|_{L^q_t L^r_x} &\lesa \Big\| u \mb{E}\Big[ \Big| \sum_J \epsilon_J e^{it\Delta} g_J \Big| \Big] \Big\|_{L^q_t L^r_x} \\
                              &\lesa  \mb{E}\Big[  \Big\| u \sum_J e^{it\Delta} \epsilon_J g_J \Big\|_{L^q_t L^r_x} \Big] \\
                              &\lesa  ( \min\{\alpha, \lambda, \alpha \lambda\})^{d+1 - \frac{d+1}{r} - \frac{2}{q}} \alpha^{\frac{1}{r}-1} \lambda^{\frac{1}{q} - \frac{1}{2}}
                                                \| u \|_{U^2_{|\nabla|}}  \mb{E}\Big[ \Big\| \sum_J \epsilon_J g_J \Big\|_{L^2}\Big]\\
                              &\lesa ( \min\{\alpha, \lambda, \alpha \lambda\})^{d+1 - \frac{d+1}{r} - \frac{2}{q}} \alpha^{\frac{1}{r}-1} \lambda^{\frac{1}{q} - \frac{1}{2}}
                                                \| u \|_{U^2_{|\nabla|}} \Big(\sum_J \| g_J \|_{L^2}^2 \Big)^\frac{1}{2}.
    \end{align*}
Applying the definition the $U^2_\Delta$ norm, the required bound follows.
\end{proof}

Strictly speaking the above theorem can also be obtain via the vector valued version of Theorem \ref{thm:wave-schrodinger bilinear free solns} from \cite{Candy2019b}, see for instance \cite[Section 1.2]{Candy2019b}. However the above alternative argument is more direct, and has the distinct advantage that it can be applied in more general situations. As an example, consider the following special case of the multilinear restriction theorem  \cite{Bennett2006}.

\begin{theorem}[Multilinear restriction for Schr\"odinger { \cite{Bennett2006}} ]\label{thm:multi restr}
Let $d\g 2$ and $\epsilon>0$. Then for any $R\g 1$ and any $f_j \in L^2(\RR^d)$, $j=1, \dots, d$  with $ \supp \widehat{f_j} \subset \{ |\xi - e_j| \ll 1 \}$ we have
            $$ \Big\| \Pi_j e^{it\Delta} f_j \Big\|_{L^{\frac{2}{d-1}}_{t,x}( \{ |t|+|x| < R\})} \lesa R^\epsilon \Pi_j \| f_j \|_{L^2}. $$
\end{theorem}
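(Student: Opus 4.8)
As the statement is a special case of the multilinear restriction theorem of \cite{Bennett2006}, the quickest route is simply to invoke it; but let me sketch the structure of a direct proof. The surfaces in play are the $d$ pieces of the Schr\"odinger paraboloid $\{(\tau,\xi)\in\RR^{1+d}: \tau+|\xi|^2=0\}$ lying over the unit caps about $e_1,\dots,e_d$, whose unit normals $\nu_j\propto(1,2e_j)$ are linearly independent; since there are $d$ surfaces in the $(d+1)$-dimensional ambient space, after integrating out the single direction orthogonal to all the $\nu_j$ the problem becomes the standard $d$-linear one in $\RR^d$. The first step would be to pass to a discretised local form: fix $R\geq 1$, decompose each $\widehat{f_j}$ into caps of radius $R^{-1/2}$, and apply a wave packet decomposition, so that on $\{|t|+|x|<R\}$ one has $e^{it\Delta}f_j = \sum_{T\in\mathbb{T}_j} c_T\phi_T$ with $\phi_T$ an $L^2$-normalised bump adapted to a tube $T$ of dimensions $R^{1/2}\times\cdots\times R^{1/2}\times R$ whose long axis points along $\nu_j$ at the centre of the associated cap, and $\sum_T|c_T|^2\lesssim\|f_j\|_{L^2}^2$. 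After the usual pigeonholing and local-constancy reductions, the claim reduces to a multilinear Kakeya (tube-overlap) bound for the $d$ transversal families $\mathbb{T}_1,\dots,\mathbb{T}_d$.

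The geometric heart is then this Kakeya bound, whose model case --- all tubes in the $j$-th family pointing in exactly the direction $\nu_j$ --- is the Loomis--Whitney inequality: projecting the $j$-th family along $\nu_j$ turns $\prod_j\big(\sum_{T\in\mathbb{T}_j}\chi_T\big)^{1/(d-1)}$ into $\prod_j(G_j\circ\pi_j)^{1/(d-1)}$ for functions $G_j$ on the $(d-1)$-dimensional quotients, and Loomis--Whitney yields the claimed bound with no loss at all. The $R^\epsilon$ in the genuine case --- slowly varying tube directions and honest curvature --- would enter in two places: (i) the multilinear Kakeya estimate for tube families with non-constant directions, obtained either by the induction-on-scales argument of \cite{Bennett2006} or, more transparently, by a polynomial partitioning argument in the spirit of Guth; and (ii) an outer induction on scales deducing the restriction estimate at scale $R$ from the one at scale $R^{1/2}$ via parabolic rescaling --- which refines the $R^{-1/2}$-caps and feeds in the curvature of the paraboloid --- iterated $O(\log\log R)$ times with an absolute loss per step, so that the total loss is $R^{o(1)}$.

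Two remarks on where the real difficulty lies. When $d=2$ the entire scheme collapses to the classical transversal bilinear restriction estimate for the paraboloid, which holds even without the $R^\epsilon$ factor by an elementary $L^2$ argument, so the substantial case is $d\geq3$. The main obstacle is precisely the multilinear Kakeya bound of step (i), together with the bookkeeping needed to keep the cumulative loss in the scale induction of step (ii) down to $R^\epsilon$; these are the substantive contributions of \cite{Bennett2006} (and, for the sharp Kakeya constant, of Guth), whereas the wave packet decomposition, the pigeonholing, and the reduction to Kakeya are by now routine.
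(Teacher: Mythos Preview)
The paper does not give its own proof of this statement: Theorem~\ref{thm:multi restr} is quoted directly from \cite{Bennett2006} and used as a black box in the proof of Theorem~\ref{thm:multi restr U^2}. So there is no in-paper argument to compare against; the relevant question is only whether your sketch is a faithful outline of the Bennett--Carbery--Tao proof.

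On that score your outline is essentially correct. The identification of the normals $(1,2e_j)$, the wave-packet decomposition at scale $R^{1/2}$, the reduction to a multilinear Kakeya estimate for $d$ transversal tube families, the Loomis--Whitney model case, and the $R^\epsilon$ loss arising from an induction-on-scales are all the right ingredients. Two minor comments: first, the line ``after integrating out the single direction orthogonal to all the $\nu_j$ the problem becomes the standard $d$-linear one in $\RR^d$'' is heuristic rather than an actual step in the argument --- neither the Bennett--Carbery--Tao proof nor Guth's later heat-flow/polynomial approach literally reduces dimension this way. Second, the passage from multilinear Kakeya to multilinear restriction in \cite{Bennett2006} is not quite the parabolic-rescaling induction you describe (that is closer to the later Bourgain--Guth mechanism); the original argument uses a different bootstrapping. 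Either route is fine for a sketch, but if you intend this as a summary of \cite{Bennett2006} specifically, that distinction is worth flagging.
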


It is conjectured that the $R^\epsilon$ loss can be removed, but this is currently an open question (however see \cite{Bejenaru2020} and \cite{Bejenaru2019a} for recent progress). The $U^2$ version of Theorem \ref{thm:multi restr} is then the following.

\begin{theorem}[Multilinear restriction for Schr\"odinger in $U^2$]\label{thm:multi restr U^2}
Let $d\g 2$ and $\epsilon>0$. Then for any $R\g 1$ and any $u_j \in U^2_{\Delta}$, $j=1, \dots, d$  with $ \supp \widehat{u} \subset \{ |\xi - e_j| \ll 1 \}$ we have
            $$ \Big\| \Pi_j u_j \Big\|_{L^{\frac{2}{d-1}}_{t,x}( \{ |t|+|x| < R\})} \lesa R^\epsilon \Pi_j \| u_j \|_{U^2_\Delta}. $$
\end{theorem}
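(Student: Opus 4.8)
The plan is to run the same Marcinkiewicz--Zygmund plus randomisation argument used in the proof of Theorem~\ref{thm:wave-schro bi U2}, iterated $d$ times, once for each factor. The starting point is Theorem~\ref{thm:multi restr}, which gives the estimate for homogeneous Schr\"odinger solutions. First I would observe that the localisation to $\{|t|+|x|<R\}$ and the $R^\epsilon$ loss are harmless: the inequality one proves at each step is linear in a fixed sum of homogeneous solutions, so the cut-off set and the constant $R^\epsilon$ simply come along for the ride.

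The induction step is as follows. Suppose we have already upgraded the first $k-1$ factors, i.e.\ we know
        $$ \Big\| \Big( \Pi_{j=1}^{k-1} u_j \Big)\, e^{it\Delta} f_k \, \Big( \Pi_{j=k+1}^d e^{it\Delta}f_j \Big) \Big\|_{L^{\frac{2}{d-1}}_{t,x}(\{|t|+|x|<R\})} \lesa R^\epsilon \Big(\Pi_{j=1}^{k-1} \|u_j\|_{U^2_\Delta}\Big) \| f_k \|_{L^2} \Pi_{j=k+1}^d \| f_j \|_{L^2}, $$
where the support condition $\supp\widehat{u_j}\subset\{|\xi-e_j|\ll 1\}$ (and the analogous condition on $f_j$) is in force throughout; note the support conditions are preserved since randomised sums of functions with a given frequency support still have that frequency support. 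Now write $u_k=\sum_{I\in\mc{I}} e^{it\Delta} g_I$ as a $U^2_\Delta$ atom, introduce i.i.d.\ signs $(\epsilon_I)$, and use disjointness of the intervals together with Khintchine to bound $|u_k|\lesa \big(\sum_I |e^{it\Delta}g_I|^2\big)^{1/2}\approx \mb{E}\big[\big|\sum_I \epsilon_I e^{it\Delta} g_I\big|\big]$. Since all exponents are $\g 1$, pulling the expectation outside the norm by Minkowski, applying the inductive hypothesis to the homogeneous solution $e^{it\Delta}\big(\sum_I\epsilon_I g_I\big)$, and then using H\"older in $\omega$ followed by another application of Khintchine to pass from $\mb{E}\big[\|\sum_I\epsilon_I g_I\|_{L^2}\big]$ to $\big(\sum_I\|g_I\|_{L^2}^2\big)^{1/2}\les 1$, we replace $\|f_k\|_{L^2}$ by $\|u_k\|_{U^2_\Delta}$. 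Summing over the atomic decomposition of $u_k$ with the $\ell^1$ coefficients and taking the infimum gives the inductive step. The base case $k=1$ is exactly Theorem~\ref{thm:multi restr}, and after $d$ steps we arrive at the claimed bound.

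There is essentially no serious obstacle here; the only points requiring a little care are bookkeeping ones. First, one must check that at each stage the object to which the previous estimate is applied is genuinely of the form to which it applies --- i.e.\ that $e^{it\Delta}(\sum_I\epsilon_I g_I)$ is a homogeneous (or, at intermediate stages, already-upgraded) solution with the correct frequency localisation $\{|\xi-e_k|\ll 1\}$; this holds because each $g_I$ inherits the frequency support of $u_k$. Second, one should make sure the product structure and the common cut-off $\{|t|+|x|<R\}$ are untouched by the randomisation, which is clear since $\ind_{\{|t|+|x|<R\}}$ and the other factors are deterministic and the expectation is in the probability variable only. I would note in passing that, exactly as remarked after Theorem~\ref{thm:wave-schro bi U2}, the same conclusion could be extracted from a vector-valued form of Theorem~\ref{thm:multi restr}, but the iterated randomisation argument is cleaner and self-contained, so I would present it that way.
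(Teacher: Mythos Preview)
Your inductive outline matches the paper's strategy exactly: upgrade one factor at a time from a homogeneous solution to a $U^2_\Delta$ function via randomisation and Khintchine. However, there is a gap. You assert that ``all exponents are $\g 1$'' and invoke Minkowski to pull the expectation outside the $L^{\frac{2}{d-1}}_{t,x}$ norm, but for $d \g 4$ the exponent $\tfrac{2}{d-1}$ is strictly less than $1$, so $L^{\frac{2}{d-1}}$ is only a quasi-Banach space and Minkowski's integral inequality is not available (indeed it reverses). Thus the step taking $\big\| \mb{E}\big[|F|\big] \, W \big\|_{L^{2/(d-1)}}$ to $\mb{E}\big[\| F W\|_{L^{2/(d-1)}}\big]$ fails. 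Your argument as written is only valid for $d\in\{2,3\}$.

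The paper circumvents this by matching the Khintchine exponent to the Lebesgue exponent: instead of bounding $|u_1|$ by $\mb{E}\big[\big|\sum_I \epsilon_I e^{it\Delta} f_I\big|\big]$, it uses
$$ |u_1| \les \Big(\sum_I |e^{it\Delta} f_I|^2\Big)^{\frac{1}{2}} \approx \Big( \mb{E}\Big[ \Big| \sum_I \epsilon_I e^{it\Delta} f_I \Big|^{\frac{2}{d-1}} \Big]\Big)^{\frac{d-1}{2}}, $$
valid since Khintchine holds for all $0<p<\infty$. With this choice the $L^{\frac{2}{d-1}}$ norm and the expectation interchange \emph{exactly} by Fubini, so no triangle inequality is needed. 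After applying the inductive estimate inside the expectation one is left with $\big(\mb{E}\big[\|\sum_I \epsilon_I f_I\|_{L^2}^{2/(d-1)}\big]\big)^{(d-1)/2}$, which is then controlled by the second moment $\big(\sum_I \|f_I\|_{L^2}^2\big)^{1/2}$ via H\"older in the probability variable since $\tfrac{2}{d-1}\les 2$. This choice of Khintchine exponent is precisely the place where the multilinear proof must depart from the bilinear template of Theorem~\ref{thm:wave-schro bi U2}; once you make this adjustment, the rest of your iteration goes through unchanged.
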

\begin{proof}
Let $B_R = \{ |t|+|x| < R\}$. We proceed as in the proof of Theorem \ref{thm:wave-schro bi U2}. Thus suppose that $u_1 = \sum_I e^{it\Delta} f_I$ is $U^2_\Delta$ atom, and let $u_j^0 = e^{it\Delta} f_j$ for $j=2, \dots, d$. Let $\epsilon_I$ be a family of i.i.d. random variables with $\epsilon_I = \pm 1$ with equal probability. An application of Khintchine's inequality implies that
   $$ |u_1| \les \Big( \sum_I | e^{it\Delta} f_I |^2 \Big)^\frac{1}{2} \approx \Big(\mb{E}\Big[ \Big|\sum_I \epsilon_I e^{it|\nabla|} f_I\Big|^{\frac{2}{d-1}} \Big]\Big)^{\frac{d-1}{2}} $$
and hence Theorem \ref{thm:multi restr} together with H\"older's inequality gives
   \begin{align*}
         \Big\| u_1 \Pi_{j=2}^d u_j^0 \Big\|_{L^{\frac{2}{d-1}}_{t,x}(B_R)} &\lesa \Big\| \Big(\mb{E}\Big[ \Big|\sum_I \epsilon_I e^{it\Delta} f_I\Big|^{\frac{2}{d-1}} \Big]\Big)^\frac{d-1}{2} \Pi_{j=2}^d u_j^0 \Big\|_{L^{\frac{2}{d-1}}_{t,x}(B_R)} \\
         &\lesa \Big(\mb{E}\Big[ \Big\| \sum_I \epsilon_I e^{it\Delta} f_I \Pi_{j=2}^d u_j \Big\|_{L^{\frac{2}{d-1}}_{t,x}(B_R)}^{\frac{2}{d-1}} \Big]\Big)^{\frac{d-1}{2}} \\
         &\lesa R^\epsilon \Big(\mb{E}\Big[ \Big\| \sum_I \epsilon_I f_I \Big\|_{L^2}^{\frac{2}{d-1}} \Big]\Big)^\frac{d-1}{2} \Pi_{j=2}^d \| f_j \|_{L^2} \\
         &\lesa R^\epsilon  \Big(\mb{E}\Big[ \Big\| \sum_I \epsilon_I f_I \Big\|_{L^2}^2 \Big]\Big)^\frac{1}{2} \Pi_{j=2}^d \| f_j \|_{L^2}  \approx R^\epsilon \Big( \sum_I \|f_I \|_{L^2}^2 \Big)^\frac{1}{2} \Pi_{j=2}^d \| f_j \|_{L^2}.
   \end{align*}
Applying the definition of the $U^2_\Delta$ norm, we conclude that
        \begin{equation}\label{eqn:thm multi restric U2:one U2}
            \Big\| u_1 \Pi_{j=2}^d u_j^0 \Big\|_{L^{\frac{2}{d-1}}_{t,x}(B_R)} \lesa R^\epsilon \| u_1 \|_{U^2_\Delta} \Pi_{j=2}^d \| f_j \|_{L^2}.
        \end{equation}
As in the proof of Theorem \ref{thm:wave-schro bi U2}, repeating this argument with Theorem \ref{thm:multi restr} replaced with \eqref{eqn:thm multi restric U2:one U2} and $u_1$ replaced with $u_2$ gives
         $$   \Big\| u_1 u_2 \Pi_{j=3}^d u_j^0 \Big\|_{L^{\frac{2}{d-1}}_{t,x}(B_R)} \lesa R^\epsilon \| u_1 \|_{U^2_\Delta} \| u_2 \|_{U^2_\Delta}  \Pi_{j=3}^d \| f_j \|_{L^2}. $$
The required bound follows by continuing in this manner.
\end{proof}

We have not attempted to write down the most general transference type argument that can be deduced from the above arguments. However the underlying idea is simple; if a estimate holds for free solutions, then via randomisation it should hold in the vector valued case, and consequently it will also hold in $U^2$. Of course proving $U^p$ bounds, with $p\not = 2$ is substantially more challenging.

\subsection*{Acknowledgements} The author would like to thank Sebastian Herr and Kenji Nakanishi for a number of helpful discussions, as well as the University of Bielefeld and MATRIX for their kind hospitality while part of this work was conducted.

\section{Proof of Theorem \ref{thm:wave-schrodinger bilinear free solns}}\label{sec:proof of bilinear}
It suffices to check  the conditions in \cite{Candy2019b}. Suppose that $\xi_0, \eta_0 \in \RR^d$ such that \eqref{eq:wave-schro general trans} holds, and define $\lambda =  |\eta_0|$, and $\alpha = | \frac{\xi_0}{|\xi_0|} + 2 \eta_0 |$.  Let
	$$\Lambda_1 = \big\{ |\xi| \approx \lambda, \ma(\xi, \xi_0) \ll  \min\{1, \alpha \} \big\}, \qquad \Lambda_2 =  \{ |\xi - \eta_0| \ll \alpha\}$$
and
    $$\Phi_1(\xi) = |\xi|, \qquad \Phi_2(\xi) = -|\xi|^2, \qquad \mc{H}_1 = \lambda^{-1}, \qquad \mc{H}_1= 1. $$
In view of \cite[Lemma 2.1 and Theorem 1.2]{Candy2019b}, for $\{j, k\}=\{1,2\}$ and $\xi \in \Lambda_j$, $\eta \in \Lambda_k$, it suffices to check the following conditions:
\begin{enumerate}
	\item for all $v\in \RR^d$ we have
		$$v \cdot ( \nabla \Phi_j(\xi) - \nabla \Phi_k(\eta) ) = 0 \qquad \Longrightarrow \qquad \big| \nabla^2 \Phi_j(\xi) v \wedge \big( \nabla \Phi_j(\xi) - \nabla \Phi_k(\eta) \big) \big| \gtrsim \mc{H}_j \alpha |v|, $$
		
	\item for $\xi' \in \Lambda_j$ and $\eta' \in \Lambda_k$ we have
			$$ | \nabla \Phi_j(\xi) - \nabla \Phi_j(\xi')| + |\nabla \Phi_k(\eta) - \nabla \Phi_k(\eta')| \ll \alpha,$$
			
	\item the Hessian's satisfy
			$$ | \nabla \Phi_j(\xi) - \nabla \Phi_j(\xi') - \nabla^2 \Phi_j(\xi) (\xi - \xi') | \ll \mc{H}_j |\xi - \xi'|,$$
			
	\item for $2<m \les 5d$ we have the derivative bounds
			$$ \| \nabla^m \Phi_j \|_{L^\infty(\Lambda_j)} (\min\{\alpha, \lambda, \alpha \lambda \})^{m-2} \lesa \mc{H}_j, \qquad \mc{H}_j \min\{\alpha, \lambda, \alpha \lambda\} \lesa \alpha. $$			
			
    \item we have the surface measure condition
    		$$ \sup_{(a,h) \in \RR^{1+d}} \sigma_{d-1}\big( \big\{ \xi \in \Lambda_2 \cap (h-\Lambda_1) \big| \,\, \Phi_2(\xi) + \Phi_1(h-\xi) = a\}  \big) \lesa \big( \min\{\alpha, \lambda, \alpha \lambda \} \big)^{d-1}$$
    	where $\sigma_{d-1}$ is the induced Lebesgue surface measure.
\end{enumerate}
To check the first property (i), by unpacking the definition, our goal is to show that for any $\xi \in \Lambda_1$ and $\eta \in \Lambda_2$ we have
    $$z \cdot ( \omega + 2\eta ) = 0 \qquad \Longrightarrow \qquad \big| (z - (\omega\cdot z) \omega )  \wedge ( \omega  + 2\eta) \big| \gtrsim |z| |\omega + 2\eta|, $$
where $\omega = \frac{\xi}{|\xi|}$. In view of the definition of the sets $\Lambda_j$ we have
       $$ | ( \omega + 2 \eta ) \cdot \omega | \gtrsim | \omega  + 2 \eta|$$
and hence as $ (z \cdot \omega) ( \omega + 2\eta) \cdot \omega = 2  z \cdot ( \eta - (\eta\cdot \omega) \omega) $ we get
	$$ |z \cdot \omega| \les \frac{2| z \cdot( \eta - (\eta \cdot \omega) \omega)| }{| ( \omega + 2 \eta) \cdot \omega| }  \lesa |z - (\omega \cdot z) \omega | .$$
Therefore
	\begin{align*}
		\big| \big( z -  (\omega \cdot z) \omega \big) \wedge ( \omega + 2\eta) \big| &\g \big| z -  (\omega \cdot z) \omega \big| |( \omega + 2\eta)\cdot \omega | \gtrsim |z| | \omega + 2\omega|
	\end{align*}
as required.

The properties  (ii), ... , (iv) follow by direct computation. Finally, to check the surface measure condition (v), we note that the vector $N=\frac{\xi_0}{|\xi_0|} + 2 \eta_0$ is essentially normal to the surface. On the other hand, from \eqref{eq:wave-schro general trans}, $N$ is roughly pointing in the direction $\frac{\eta_0}{|\eta_0|}$. Hence the surface measure can be bounded by projecting onto the plane orthogonal to $\frac{\eta_0}{|\eta_0|}$. Since this projection is contained in a ball of radius  $\lesa \min\{\alpha, \lambda, \alpha \lambda\}$, the bound follows.

\section{Counter Examples}\label{sec:counter}
We first observe that by a randomisation argument, if the estimate \eqref{eqn:bilinear intro} holds for all $f, g \in L^2$ with $\supp \widehat{f} \subset \Lambda_1 \subset \RR^n$ and $\supp \widehat{g} \subset \Lambda_2$, then in fact we also have the vector valued version
        \begin{equation}\label{eqn:bilinear vec val}
            \Big\| \Big( \sum_{j} |e^{it|\nabla|} f_j |^2 \Big)^\frac{1}{2} \Big( \sum_{k} |e^{it\Delta} g_k |^2 \Big)^\frac{1}{2} \Big\|_{L^q_t L^r_x} \lesa \Big( \sum_j \| f_j \|_{L^2}^2\Big)^\frac{1}{2} \Big( \sum_k \| g_k \|_{L^2_x}^2 \Big)^\frac{1}{2}
        \end{equation}
for all $\supp \widehat{f}_j \subset \Lambda_1$, $\supp \widehat{g}_k \subset \Lambda_2$. This follows by noting that if $\epsilon_j$ is an i.i.d. family of random variables with $\epsilon_j = \pm$ with equal probability, then as in the proof of Theorem \ref{thm:wave-schro bi U2}, we have via Khintchine's inequality and \eqref{eqn:bilinear intro}
    \begin{align*}
      \Big\| \Big( \sum_j |e^{it|\nabla|} f_j |^2 \Big)^\frac{1}{2} e^{it\Delta } g \Big\|_{L^q_t L^r_x} &\approx \Big\| \mb{E} \Big[ \Big| \sum_{j} \epsilon_j e^{it|\nabla|} f_j \Big| \Big] e^{it\Delta} g \Big\|_{L^q_t L^r_x} \\
      &\lesa \mb{E}\Big[  \Big\| \sum_{j} \epsilon_j e^{it|\nabla|} f_j e^{it\Delta} g \Big\|_{L^q_t L^r_x} \Big] \\
      &\lesa \mb{E}\Big[ \Big\| \sum_j \epsilon_j e^{it|\nabla|} f_j \Big\|_{L^2_x}\Big] \| g \|_{L^2_x} \\
      &\lesa \mb{E}\Big[ \Big\| \sum_j \epsilon_j e^{it|\nabla|} f_j \Big\|_{L^2_x}^2\Big]^\frac{1}{2} \| g \|_{L^2_x}\approx \Big( \sum_j \| f_j \|_{L^2}^2 \Big)^\frac{1}{2} \| g\|_{L^2}.
    \end{align*}
Repeating this argument for the Schr\"odinger component then gives \eqref{eqn:bilinear vec val}. Consequently, we see that the scalar version \eqref{eqn:bilinear intro} holds, if and only if the vector valued version \eqref{eqn:bilinear vec val} holds. Thus to prove Theorem \ref{thm:transverse counter} and Theorem \ref{thm:non-transverse counter}, it suffices to obtain vector valued counter examples.

\subsection{Proof of Theorem \ref{thm:transverse counter} } Let $N\g 1$ and $\widehat{f}, \widehat{g} \in C^\infty_0$ with
        $$ \supp \widehat{f} \subset \{ |\xi_1 - 1| \ll 1, |\xi'| \ll N^{-1} \}, \qquad \supp \widehat{g} \subset \{ |2 \xi - e_1 - e_2 | \ll N^{-\frac{1}{2}} \}$$
and $\| f \|_{L^2} \approx N^{\frac{d-1}{2}}$, $\| g \|_{L^2} \approx N^{\frac{d}{4}}$. A short computation using integration by parts shows that we can choose $f, g$ such that
        $$ |u(t,x)| = |e^{it|\nabla|} f(x) | \g 1 \qquad \text{for all } \qquad |t|\les N^{2}, \,\, |x_1 + t| \les 1, \,\, |x'| \les N $$
and
        $$ |v(t,x)| = |e^{it\Delta} g(x) | \g 1 \qquad \text{ for all } \qquad |t|\les N^{1}, \,\, |x_1 + t| \les N^\frac{1}{2}, \,\, |x_2 + t | \les N^{\frac{1}{2}}, \,\, |x''| \les N^{\frac{1}{2}}$$
where we write $x = (x_1, x') = (x_1, x_2, x'')$. In other words the free wave $u$ is $\g 1$ on a plate of dimension $N^2 \times 1 \times N^{d-1}$ oriented in the $(1, -e_1)$ direction, with short direction $e_1$, while the free Schr\"odinger wave $v$ is $\g 1 $ on a tube of dimensions $N \times N^{\frac{d}{2}}$ oriented in the $(1, -e_1 - e_2)$ direction. Define the set
        $$ \Omega = \{ |t| \les N^{-2}, |x_1 + t|\les N^\frac{1}{2}, |x'|\les N \}. $$
The support properties of $u$, implies that for any $(t,x) \in \Omega$ we have
        $$ U(t,x) = \Big( \sum_{\substack{j \in \ZZ \\ |j|\les N^{\frac{1}{2}}}} |u(t, x + j e_1 )|^2 \Big)^\frac{1}{2} \gtrsim 1. $$
Similarly, translating the free Schr\"odinger wave in both space and time gives for any $(t, x) \in \Omega$
        $$ V(t,x) =  \Big( \sum_{\substack{(j_2, \dots, j_d)\in \ZZ^{d-1} \\ |j_1|, \dots, |j_d| \lesa N^{\frac{1}{2}} }} \sum_{\substack{k\in \ZZ \\ |k|\les N }} \big|v\big(t + N k, x + N^\frac{1}{2}(j_2 e_2 + \dots + j_d e_d) \big)\big|^2 \Big)^\frac{1}{2} \gtrsim 1. $$
Since the wave and Schr\"odinger equations are translation invariant, the bound \eqref{eqn:bilinear vec val} implies that
  \begin{align*}
    N^{\frac{2}{q}} N^{\frac{d-1}{r} + \frac{1}{2r}} \lesa \| \ind_\Omega \|_{L^q_t L^r_x} &\lesa \| U V \|_{L^q_t L^r_x} \\
                &\lesa \Big( \sum_{|j| \les N^\frac{1}{2}} \| f \|_{L^2}\Big)^\frac{1}{2} \Big( \sum_{ |j_1|, \dots, |j_d| \lesa N^{\frac{1}{2}}}  \sum_{|k|\les N } \| g \|_{L^2}^2 \Big)^\frac{1}{2} \lesa N^{\frac{d-1}{2} + \frac{1}{4}} \times N^{\frac{d}{2} + \frac{1}{4}}.
  \end{align*}
Letting $N \to \infty$, we see that this is only possible if
        $$ \frac{2}{q} + \frac{d-1}{r} + \frac{1}{2r} \les d. $$

\subsection{Proof of Theorem \ref{thm:non-transverse counter}} Let $1\les M \les N$ and $\widehat{f}, \widehat{g} \in C^\infty_0$ with
        $$ \supp \widehat{f} \subset \{ |\xi_1 - 1| \ll 1, |\xi'| \ll N^{-1} \}, \qquad \supp \widehat{g} \subset \{ |2 \xi - e_1| \ll M^{-1} \}$$
and $\| f \|_{L^2} \approx N^{\frac{d-1}{2}}$, $\| g \|_{L^2} \approx M^{\frac{d}{2}}$. A short computation using integration by parts shows that we can choose $f, g$ such that
        $$ |u(t,x)| = |e^{it|\nabla|} f(x) | \g 1 \qquad \text{for all } \qquad |t|\les N^{2}, \,\, |x_1 + t| \les 1, \,\, |x'| \les N $$
and
        $$ |v(t,x)| = |e^{it\Delta} g(x) | \g 1 \qquad \text{ for all } \qquad |t|\les M^{1}, \,\, |x_1 + t| \les M, \,\, |x'| \les M.$$
In other words the free wave $u$ is $\g 1$ on a plate of dimension $N^2 \times 1 \times N^{d-1}$ oriented in the $(1, -e_1)$ direction, with short direction $e_1$, while the free Schr\"odinger wave $v$ is $\g 1 $ on a tube of dimensions $M^2 \times M^d$ oriented in the $(1, -e_1)$ direction. Similar to the proof of Theorem \eqref{thm:transverse counter}, we consider a number of temporal translated Schr\"odinger waves covering the set
        $$ \Omega = \{ |t| \les N^{-2}, |x_1 + t|\les 1, |x'|\les M \}. $$
More precisely, we have for all $(t,x) \in \Omega$
            $$V(t,x) = \Big( \sum_{\substack{ j \in \ZZ \\ |j| \les \frac{N^2}{M^2}}} |v(t+j M^2, x)|^2 \Big)^\frac{1}{2} \gtrsim 1 .$$
Since we clearly have $|u| \g 1$ on $\Omega$ by construction, we see that if \eqref{eqn:bilinear intro} holds, then the vector valued version \eqref{eqn:bilinear vec val} holds, and hence
    $$ N^{\frac{2}{q}} M^{\frac{d-1}{r}} \lesa \| \ind_\Omega \|_{L^q_t L^r_x} \lesa \| u V \|_{L^q_t L^r_x} \lesa \| f \|_{L^2}  \Big( \sum_{ |j| \les \frac{N^2}{M^2}} \|g\|_{L^2}^2 \Big)^\frac{1}{2} \lesa N^{\frac{d-1}{2}} \times N M^{\frac{d}{2}-1}. $$
Rearranging, we see that we must have
    $$N^{\frac{2}{q} - \frac{d+1}{2}} M^{\frac{d-1}{r} - \frac{d-2}{2} } \lesa 1.$$
Letting $M=1$ and $N\to \infty$ gives the restriction
        $$ \frac{1}{q} \les \frac{d+1}{4}. $$
On the other hand, letting $M=N \to \infty$, gives the condition
        $$ \frac{2}{q} + \frac{d-1}{r} \les d -\frac{1}{2}. $$

% ----------------------------------------------------------------
\bibliographystyle{amsplain}
%\bibliography{DatabaseCurrent}
\providecommand{\bysame}{\leavevmode\hbox to3em{\hrulefill}\thinspace}
\providecommand{\MR}{\relax\ifhmode\unskip\space\fi MR }
% \MRhref is called by the amsart/book/proc definition of \MR.
\providecommand{\MRhref}[2]{%
  \href{http://www.ams.org/mathscinet-getitem?mr=#1}{#2}
}
\providecommand{\href}[2]{#2}

\end{document}